\newtheorem{theorem}{Theorem}[section]
\newtheorem{lemma}[theorem]{Lemma}
\theoremstyle{definition}
\newtheorem*{defi}{Acknowledgment}
\newtheorem{proposition}[theorem]{Proposition}
\newtheorem{corollary}[theorem]{Corollary}
\theoremstyle{remark}
\newtheorem{remark}[theorem]{Remark}
\numberwithin{equation}{section}
\newcommand{\R}{\mathbb{R}}
\newcommand{\E}{\mathbb{E}}
\newcommand{\Sp}{\mathbb{S}}
\newcommand{\Hy}{\mathbb{H}}
\newcommand{\Q}{\mathbb{Q}}
\newcommand{\Qe}{\Q^n_\e\times\R}
\newcommand{\Qne}{\Q^n_\e}
\newcommand{\Qee}{\Q_\e}
\newcommand{\Se}{\Sp^n\times\R}
\newcommand{\A}{\mathcal A}
\newcommand{\Nt}{\widetilde\nabla}
\newcommand{\Ntj}{^{f_j}\widetilde\nabla}
\newcommand{\NB}{\overline\nabla}
\newcommand{\Nb}{\bar{N}}
\newcommand{\X}{\mathfrak X}
\newcommand{\ft}{\tilde f}
\newcommand{\e}{\epsilon}
\newcommand{\co}{\colon}
\newcommand{\dt}{\partial_t}
\newcommand{\pl}{\langle}
\newcommand{\pr}{\rangle}
\newcommand{\spa}{\text{span}}
\newcommand{\G}{\Gamma}
\newcommand{\Ric}{\mathrm{Ric}}
\newcommand{\Addresses}{{
\bigskip 
\footnotesize
Institute of Mathematics and Computer Science \newline
University of S\~ao Paulo, S\~ao Carlos, Brazil \\
\textit{E-mail address:} \texttt{estelagarciabr@gmail.com,
manfio@icmc.usp.br}
}}
\title{Einstein submanifolds with parallel mean curvature into $\Se$}
\date{}
\author{Estela Garcia\footnote{The first author is partially supported
by CAPES grant 10869871/D and CNPq  grant 141475/2019-6} and 
Fernando Manfio\footnote{The second author is supported by Fapesp, 
grant 2016/23746-6.}}
\begin{document}

\maketitle

\begin{abstract}
We prove that Einstein submanifolds in $\Sp^n\times\R$ with flat
normal bundle and parallel mean curvature are warped product
of isometric immersions.
\end{abstract}

\noindent {\bf Key words:} {\small {\em Einstein submanifolds, 
Parallel mean curvature, Flat normal bundle.}}

\section{Introduction}

One of the basic problems in submanifold theory is to provide 
conditions that imply that an isometric immersion of a product
manifold must be a extrinsic product of isometric immersions.
The first contribution to this problem was addressed by Moore
\cite{Mo} for isometric immersions into Euclidean space and
extended by Molzan \cite{Mol} for any space form as ambient
space. They showed that an isometric immersion of a
Riemannian product into a space form must be an extrinsic
product of isometric immersions  of the factors whenever its
second fundamental form is adapted to the product structure
of the manifold.

In this paper we consider isometric immersions of Einstein
manifolds into the product space $\Qe$, with flat normal bundle
and parallel mean curvature vector field. Recall that a 
Riemannian manifold $M$ is an {\em Einstein manifold} if
the Ricci tensor satisfies
\[
\Ric(X,Y) = \lambda\pl X,Y\pr,
\]
for all $X,Y\in\X(M)$ and some constante $\lambda\in\R$.
Riemannian manifolds $M^n_c$ with constant sectional 
curvature $c$ are the simplest examples of Einstein 
Manifolds, where $\lambda=(n-1)c$.

An isometric immersion is said to be Einstein if the submanifold,
endowed with the induced metric, is an Einstein manifold. For
example, Ryan \cite{Ry} gave a local classification of Einstein
hypersurfaces in any space form. In higher codimension,
Di Scala \cite{Sc} proved that Einstein real Kahler submanifolds
of a Euclidean space are totally geodesic provided that they 
are minimal, and same conclusion, by Nolker \cite{No2}, still 
holds for minimal Einstein submanifolds in the Euclidean space, 
with flat normal bundle.

In \cite{On}, Onti classified Einstein submanifolds into space forms,
with flat normal bundle and parallel mean curvature vector field, 
extending a previous result due to Dajczer-Tojeiro \cite{DaTo2}
for isometric immersions of Riemannian manifolds with constant
sectional curvature. More recently, Leandro-Pina-dos Santos
\cite{LePiSa} classified Einstein hypersurfaces into product space
$\Qe$.

In the present paper, we classify Einstein submanifolds
$f\co M^m\to\Se$, $m\geq3$, with flat normal bundle and parallel 
mean curvature vector field. As a first step, we prove that such 
submanifolds have the property that the tangent component of
the second factor of $\Se$ is an eigenvector of all shape operators
of $f$. They belong to a more general class, named {\em class}
$\A$, introduced by Tojeiro \cite{To} for the case of hypersurfaces
and extended to higher codimension by Mendon\c ca-Tojeiro 
\cite{MeTo}. Some important classes of submanifolds of $\Qe$ 
that are included in class $\A$ are hypersurfaces with constant
sectional curvature \cite{MaTo}, rotational hypersurfaces
\cite{DFV}, constant angle hypersurfaces \cite{To} and
biconservative submanifolds \cite{MTU}.

\begin{theorem} \label{theo:introd1}
Let $f\co M^m\to\Qe$, $m\geq3$, be an isometric immersion of a 
connected Einstein manifold, with Ricci curvature $\lambda$ and
flat normal bundle. Then $f$ belongs to class $\A$.
\end{theorem}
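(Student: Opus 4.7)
The plan is to decompose $\dt = T + \eta$ into its tangent and normal parts along $f$ and then extract, from the off-diagonal entries of the Ricci tensor in a simultaneous eigenbasis of all shape operators, a pointwise constraint that forces $T$ to lie in a single common eigenspace.

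\emph{Step 1 (Ambient curvature formulas).} The curvature tensor $\bar R$ of $\Qe$ is the pullback of the curvature of $\Qne$ under the horizontal projection $Z\mapsto Z-\pl Z,\dt\pr\dt$. A direct calculation using $\dt = T+\eta$ and $\|\dt\|=1$ yields two facts: for tangent $X,Y$ and normals $\xi,\mu$ one has $\pl \bar R(X,Y)\xi,\mu\pr = 0$; and, for any orthonormal tangent frame $\{e_i\}$,
\[
\sum_i \pl \bar R(e_i,X)Y,e_i\pr = \e\bigl[(m-1-\|T\|^2)\pl X,Y\pr - (m-2)\pl X,T\pr\pl Y,T\pr\bigr].
\]

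\emph{Step 2 (Simultaneous diagonalization and Einstein).} The Ricci equation, combined with the vanishing of $(\bar R(X,Y)\xi)^\perp$ and the hypothesis $R^\perp=0$, gives $[A_\xi,A_\mu]=0$ for all normal $\xi,\mu$. Fix $p\in M$ and choose an orthonormal basis $\{e_1,\ldots,e_m\}$ of $T_pM$ simultaneously diagonalizing every shape operator, with common eigenspaces $E_1,\ldots,E_k$. In such a basis $\alpha(e_a,e_b) = \delta_{ab}\alpha_a$, so the shape-operator contribution to $\Ric(e_a,e_b)$ via the Gauss equation is diagonal. Combining with Step~1, the off-diagonal Ricci entries are
\[
\Ric(e_a,e_b) = -\e(m-2)\pl e_a,T\pr\pl e_b,T\pr,\qquad a\neq b,
\]
and the Einstein condition, together with $m\geq 3$, forces $\pl e_a,T\pr\pl e_b,T\pr = 0$ for all $a\neq b$ in every common orthonormal eigenbasis.

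\emph{Step 3 (Rotation argument).} Since each $A_\xi$ acts as a scalar on every $E_j$, any orthogonal transformation performed independently within each $E_j$ produces another simultaneous eigenbasis. If the orthogonal projections of $T$ onto two distinct $E_i,E_j$ were both nonzero, then selecting bases whose first vectors in $E_i$ and $E_j$ are unit multiples of those projections would make $\pl e_a,T\pr\pl e_b,T\pr\neq 0$, contradicting Step~2. Hence $T$ belongs to a single $E_{j_0}$. If $\dim E_{j_0}\geq 2$, the one-parameter family of rotations of a two-plane inside $E_{j_0}$ containing $T$ produces $\pl e_a,T\pr\pl e_b,T\pr$ as a nonzero trigonometric polynomial in the rotation angle unless $T = 0$. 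In either case $T(p)$ is an eigenvector of every shape operator at $p$, which is the defining property of class $\A$.

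The main technical input is the precise form of the ambient Ricci contribution in Step~1: the rank-one correction in $T\otimes T$ with coefficient $-\e(m-2)$ is exactly what decouples the Einstein off-diagonal equations from the second fundamental form, and makes the hypothesis $m\geq 3$ essential. Once that is in place, Steps~2 and~3 are elementary linear algebra applied pointwise.
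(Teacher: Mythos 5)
Your proposal is correct and follows essentially the same route as the paper: compute the Ricci tensor from the Gauss equation, evaluate it in a simultaneous eigenbasis of the shape operators (available by flatness of the normal bundle), observe that the second-fundamental-form contribution to the off-diagonal entries vanishes, and use the Einstein condition with $m\geq 3$ and $\e\neq 0$ to force $\pl e_a,T\pr\pl e_b,T\pr=0$. Your Step 3 is a slight refinement of the paper's one-line conclusion (the paper simply notes that only one component $t_k$ can be nonzero, so $T=t_kX_k$), and in fact your rotation argument shows a bit more than is claimed, namely that at points where $T\neq 0$ its common eigenspace must be one-dimensional.
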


Theorem \ref{theo:introd1} extends Lemma 5 of \cite{LePiSa} for
the case of hypersurfaces. Our main result, which extends the 
aforementioned result of Onti \cite{On} is stated as follows:

\begin{theorem} \label{theo:main}
Let $f\co M^m\to\Se$, $m\geq3$, be a proper isometric immersion
of a connected Einstein manifold with Ricci curvature $\lambda$, 
flat normal bundle, and parallel mean curvature vector field $H$.
Then $f$ is a warped product of isometric immersions.
\end{theorem}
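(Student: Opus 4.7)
The plan is to reduce the theorem to a structure result for class $\A$ submanifolds of $\Se$ by exploiting the three hypotheses in turn. Theorem \ref{theo:introd1} gives us that $f$ belongs to class $\A$, so if we write the coordinate vector field $\dt$ of the $\R$-factor along $f$ as $\dt = T + \eta$, with $T\in TM$ and $\eta$ normal, then $A_\xi T = \mu(\xi)T$ for every normal $\xi$. If $T$ vanishes identically on $M$, then $f(M)$ lies in a slice $\Sp^n\times\{t_0\}$ and Onti's result \cite{On} applies directly; by real analyticity we may assume that $T\ne 0$ everywhere.

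First I would simultaneously diagonalize the family $\{A_\xi\}$. Flatness of the normal bundle, via the Ricci equation, forces the shape operators to commute, so $TM$ splits into common eigenspaces. The class $\A$ structure singles out one of them, $E_0=\spa(T)$, and its orthogonal complement decomposes as $\D = T^\perp = E_1\oplus\cdots\oplus E_k$, each $E_i$ being a joint eigenspace of all $A_\xi$.

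The next step is to analyze the Codazzi equation of $f$ in $\Se$, which differs from the space-form version by an explicit curvature term depending on $T$ and $\eta$. Applied to pairs of vectors in distinct eigenspaces $E_i$ and $E_j$, combined with parallelism of $H$, it should yield that each $E_i$ is integrable with umbilical leaves; applied with $T$ in the relevant slot, it should force the mean curvature vector of those leaves to be proportional to $T$, so that the $E_i$ define spherical distributions and $E_0$ integrates to totally geodesic curves. The Einstein condition, inserted into the Gauss equation, produces the additional algebraic relations on the principal curvatures that rule out degenerate configurations, in direct analogy with Onti's argument \cite{On} in the space-form case and extending Lemma~5 of \cite{LePiSa} beyond the hypersurface setting. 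Having produced a net of spherical distributions with $\spa(T)$ totally geodesic, I would then invoke the warped product decomposition result of Mendon\c ca-Tojeiro \cite{MeTo} for class $\A$ submanifolds of $\Qe$, which identifies $M$ locally with a warped product over an integral curve of $T/|T|$ and a Riemannian manifold $L$, and represents $f$ as a warped product of an isometric immersion into $\Sp^n$ with the inclusion of $\R$.

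The hard part will be combining the three structural hypotheses so as to produce exactly the umbilicity and parallelism required by the warped product criterion. Codazzi alone gives integrability of the $E_i$, parallel $H$ and flat normal bundle together control the variation of principal curvatures along each eigendistribution, and only the Einstein hypothesis — through the Gauss equation — supplies the algebraic constraints that force the eigenvalue functions to depend solely on $T$ and the multiplicities to be locally constant. Balancing these three contributions, and in particular handling the $T$-dependent correction terms that distinguish the Codazzi equations of $\Se$ from those of a space form, is the delicate point of the argument.
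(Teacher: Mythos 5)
Your outline has the right general shape (class $\A$ via Theorem \ref{theo:introd1}, simultaneous diagonalization into eigendistributions, Codazzi analysis, spherical distributions, then a decomposition theorem), but it omits the one computation that actually makes the three hypotheses interact, and its final step would not deliver the stated conclusion. The engine of the paper's proof is Lemma \ref{lemanormaxik-h}: inserting the Einstein condition into the Gauss equation along each eigendistribution gives $m\pl\xi_k,H\pr=\lambda-(m-2)\e-\|\eta\|^2\e+\|\xi_k\|^2$ for every principal normal $\xi_k$, hence $\left\|\xi_k-\tfrac{m}{2}H\right\|^2=\tfrac{m^2}{4}\|H\|^2-\lambda+(m-2)\e+\|\eta\|^2\e$ is the \emph{same} quantity for all $k$ and is constant along $\{T\}^{\perp}$. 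This single identity is what (i) yields $\pl\xi_i-\xi_k,\xi_i-\tfrac{m}{2}H\pr=\tfrac12\|\xi_i-\xi_k\|^2\neq0$, forcing $\pl\nabla_XY,Z\pr=0$ and the parallelism of the principal normals along $\{T\}^{\perp}$ (Lemma \ref{xikdupin}), and (ii) shows that $\xi_i-\xi_k$ and $\xi_j-\xi_k$ are linearly independent for distinct $i,j,k$, which is exactly what is needed in the Codazzi identity \eqref{eq:Codazziflat2} to conclude that each $E_k^\perp$ is totally geodesic (Lemma \ref{lem:SphTotGeod}). Your remark that the Einstein condition ``supplies algebraic constraints that rule out degenerate configurations'' gestures at this but does not substitute for it; Codazzi together with parallel $H$ alone will not get you from integrability to sphericity of the $E_k$ and total geodesy of the $E_k^\perp$.

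The endgame is also different from what you propose, in a way that matters. The paper does not invoke a class-$\A$ decomposition from \cite{MeTo}; it passes to $\ft=i\circ f$ in the flat space $\E^{n+2}$, recomputes the principal normals of $\ft$ (namely $i_\ast\xi_1-\|\eta\|^2\Nb$ and $i_\ast\xi_k-\Nb$), applies Hiepko's theorem \cite{Hi} to obtain an intrinsic warped product with $s+1$ factors, then Nolker's theorem \cite{No} in $\R^{n+2}$ to obtain an extrinsic warped product $F=\phi\circ(f_0\times\cdots\times f_s)$, and finally a codimension-reduction step: each factor $f_j$ is umbilical with mean curvature vector $H_j=i_\ast\xi_j-\Nb$, which is shown to be parallel in the normal connection of $f_j$ using \eqref{eq:ShapeA^i_2}, \eqref{eq:ShapeA^i_3} and Lemma \ref{xikdupin}, so each $f_j$ is a sphere inclusion and $f$ is multi-rotational. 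The two-factor splitting you describe (over an integral curve of $T/\|T\|$ and a single manifold $L$) is weaker than the claim, and the class-$\A$ structure result would not hand you the umbilicity and normal parallelism of each $H_j$ for free --- those are precisely where Lemmas \ref{lemanormaxik-h}--\ref{lem:SphTotGeod} are consumed. Finally, your disposal of the case $T\equiv0$ by ``real analyticity'' is unjustified as stated; the paper simply works under the standing assumption that $T$ is nowhere vanishing.
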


The paper is organized as follows. In the next section we just recall
the notion of extrinsic warped product of isometric immersions. In
Section \ref{sec:BasicFacts} we recall the basic equations of an
isometric immersion into $\Qe$. In Section \ref{sec:Flat} we study
submanifolds with flat normal bundle. In particular, we obtain an 
equivalent condition for a submanifold, with such properties, to 
belong to class $\A$. In Section \ref{sec:Ricci} we calculate the
Ricci tensor of a submanifold in $\Qe$, and prove that every
Einstein submanifold with flat normal bundle belongs to class
$\A$. Finally, in the last section we prove a sequence of 
lemmas and Theorem \ref{theo:main}.

\begin{defi}
We are indebted to Ruy Tojeiro and Felippe Guimar\~aes for valuable
suggestions which improved some results in this paper.
\end{defi}

\section{Preliminaries}

A metric $g$ on a product manifold $M^m=M_0\times M_1\times\ldots
\times M_k$ is called a {\em warped product} if there exist
positive functions $\rho_i\in C^\infty(M_0)$, $1\leq i\leq k$, such 
that
\[
g = \pi_0^\ast g_0 + \sum_{i=1}^k(\rho_i\circ\pi_i)^2\pi_i^\ast g_i,
\]
where $\pi_i\co M^m\to M_i$ denotes the canonical projection. We
call $M^m$ endowed with this metric the {\em warped product} of
$M_0,\ldots,M_k$ with warping functions $\rho_1,\ldots,\rho_k$
and denote it by
\[
M^m = M_0\times_{\rho_1}M_1\times\ldots\times_{\rho_k}M_k.
\]

Let $\Qne$ denote a complete and simply connected space form
of constant sectional curvature $\e$. Fix a point $q\in\Qne$ and
let $\Qee^{n_0},\Q^{n_1}_{\e_1},\ldots,\Q^{n_k}_{\e_k}$, with
$n=n_0+n_1+\ldots+n_k$, be submanifolds of $\Qne$ through
$q$ such that the first one is totally geodesic and all the others
are totally umbilical with mean curvature vectors $z_1,\ldots,z_k$
at $q$ and $\pl z_i,z_j\pr=-\e$, for $i\neq j$. The {\em warped
product representation}
\begin{equation} \label{eq:wpr}
\psi\co \Q_\e^{n_0}\times_{\sigma_1}\Q_{\e_1}^{n_1}\times
\ldots\times_{\sigma_k}\Q^{n_k}_{\e_k}\to\Qne
\end{equation}
of $\Qne$ is the map
\[
\psi(p_0,p_1,\ldots,p_k) = p_0 + \sum_{i=1}^k\sigma_i(p_0)(p_i-q),
\]
where the functions $\sigma_i\co\Q_\e^{n_0}\to\R_{+}$ are
defined by
\[
\sigma_i(p) = \left\{
\begin{array}{lcl}
1+\pl p-q,a_i\pr,& \mbox{if} & \e=0 \\
\pl p,a_i\pr, & \mbox{if} & \e\neq 0 
\end{array},
\right.
\]
and satisfy $\sigma_i(q)=1$, with $a_i=cq-z_i$.

It was shown by Nolker \cite{No} that any isometry of a warped
product with $k+1$ factors onto an open dense subset of $\Qne$
arises as the restriction of a warped product representation as
above. Given a warped product representation as in \eqref{eq:wpr}
and isometric immersions $f_i\co M_i\to\Q_{\e_i}^{n_i}$, with
$0\leq i\leq k$ and $\e_0=\e$, the map
\[
f = \psi\circ(f_0\times f_1\times\ldots\times f_k)
\]
is an isometric immersion of the warped product manifold $M^m$
with warping functions $\rho_i=\sigma_i\circ f_0$, called an
{\em extrinsic warped product} of isometric immersions. If all
factors $f_i$ of the extrinsic product are identity maps, the the
map $f$ is called the {\em multi-rotational submanifold}
determined by $\psi$ with $f_0$ as profile.

\section{Basic facts on submanifolds in $\Qe$} 
\label{sec:BasicFacts}

From now on, let $\Q^n_\e$ denote either $\Sp^n$ or $\Hy^n$, 
according to whether $\e=1$ or $\e=-1$, respectively. Given an 
isometric immersion $f\co M^m\to\Qe$, let $\dt$ denote a unit 
vector field tangent to the second 
factor. Then, a tangent vector field $T$ on $M^m$ 
and a normal vector field $\eta$ along $f$ are defined by
\begin{equation} \label{eq:dt}
\dt = f_\ast T+\eta.
\end{equation}
Using that $\dt$ is a parallel vector field in $\Qe$, we obtain by
differentiating equation \eqref{eq:dt} that
\begin{equation} \label{eq:NablaT}
\nabla_XT = A_\eta X
\end{equation}
and
\begin{equation} \label{eq:AlphaT}
\alpha_f(X,T) = - \nabla^\perp_X\eta,
\end{equation}
for all $X\in\X(M)$. Here and in the sequel $A_\eta$ stands for the shape
operator of $f$ in the direction $\eta$, given by
\[
\pl A_\eta X,Y\pr = \pl\alpha_f(X,Y),\eta\pr,
\]
for all $X,Y\in\X(M)$. 
The Gauss, Codazzi and Ricci equations for $f$ are, respectively,
\begin{eqnarray} \label{eq:Gauss}
\begin{aligned}
R(X,Y)Z &= A_{\alpha_f(Y,Z)}X-A_{\alpha_f(X,Z)}Y + \e\big(X\wedge Y \\
&+ \pl X,T\pr Y\wedge T- \pl Y,T\pr X\wedge T\big)Z,
\end{aligned}
\end{eqnarray}
\begin{eqnarray} \label{eq:Codazzi}
\left(\nabla^\perp_X\alpha_f\right)(Y,Z)-\left(\nabla^\perp_Y\alpha_f\right)(X,Z)
=\epsilon\pl(X\wedge Y)T,Z\pr\eta
\end{eqnarray}
and
\begin{eqnarray} \label{eq:Ricci}
R^\perp(X,Y)\xi=\alpha_f(X,A_\xi Y)-\alpha_f(A_\xi X,Y),
\end{eqnarray}
for all $X,Y,Z\in\X(M)$ an for all $\xi\in\G(TM^\perp)$. Equation 
\eqref{eq:Codazzi} can also be written as
\begin{equation} \label{eq:Codazzi2}
(\nabla_XA)(Y,\xi) - (\nabla_YA)(X,\xi) = \e\pl\eta,\xi\pr(X\wedge Y)T,
\end{equation}
where
\[
(X\wedge Y)T = \pl Y,T\pr X-\pl X,T\pr Y,
\]
whereas equation \eqref{eq:Ricci} is equivalent to
\begin{eqnarray} \label{eq:Ricci2}
\pl R^\perp(X,Y)\xi,\zeta\pr = \pl[A_\xi,A_\zeta]X,Y\pr.
\end{eqnarray}

Although this will not be used in this work, it is worth mentioning that
equations \eqref{eq:NablaT}--\eqref{eq:Ricci} completely determine an
isometric immersion of a Riemannian manifold $M^m$ into $\Qe$ up
to isometries of $\Qe$ (see \cite[Corollary 3]{LTV}).

Given an isometric immersion $f\co M^m\to\Qe$, consider $M^m$
as a submanifold in the underlying flat space $\E^{n+2}$, that is, 
consider the isometric immersion $\ft=i\circ f$, where 
$i:\Qe\to\E^{n+2}$ denotes de canonical inclusion. The second
fundamental forms and normal connections of $f$ and $\ft$ are
related as follows. Let $\Nb=\pi\circ i$ the unit normal vector
field to the inclusion $i$, where $\pi\co\E^{n+1}\times\R\to\E^{n+1}$
is the projection. We have
\[
\Nt_Z\Nb = \pi_\ast i_\ast Z = i_\ast(Z-\pl Z,\dt\pr\dt),
\]
for every $Z\in\X(\Qe)$, where $\Nt$ is the canonical connection
on $\E^{n+2}$. Hence,
\begin{equation} \label{eq:ShapeA^i}
A_{\Nb}^iZ = -Z + \pl Z,\dt\pr\dt.
\end{equation}
The normal spaces of $f$ and $\ft$ are related by
\[
TM^\perp_{\ft} = i_\ast TM^\perp_f\oplus\spa\{N\},
\]
where $N=\Nb\circ f=\pi\circ\ft$. Denoting by $\NB$ the Levi-Civita
connection of $\Qe$, and given $\xi\in TM^\perp_f$, it follows from
\eqref{eq:ShapeA^i} that
\[
\Nt_X i_\ast\xi =- \ft_\ast A^f_\xi X + i_\ast\nabla^\perp_X\xi +
\e\pl X,T\pr\pl\xi,\eta\pr N.
\]
In particular, we have
\begin{equation} \label{eq:ShapeA^i_2}
\Nt^\perp_X i_\ast\xi = i_\ast\nabla^\perp_X\xi + \e\pl X,T\pr\pl\xi,\eta\pr N,
\end{equation}
for every $\xi\in TM^\perp_f$, where $\Nt^\perp$ is the normal
connection of $\ft$. Moreover, 
\[
\Nt_XN = \ft_\ast(X-\pl X,T\pr T) - \pl X,T\pr i_\ast\eta
\]
which implies, in particular, that
\begin{equation} \label{eq:ShapeA^i_3}
\Nt^\perp_XN = -\pl X,T\pr i_\ast\eta.
\end{equation}

\section{Submanifolds with flat normal bundle in $\Qe$}
\label{sec:Flat}

An important class of submanifolds in $\Qe$ consists of those which 
have flat normal bundle. As in the case of submanifolds in a space
with constant sectional curvature, flatness of the normal bundle of a 
submanifold into $\Qe$ has the following useful characterization.

\begin{proposition} \label{prop:FlatNormal}
An isometric immersion $f\co M^m\to\Qe$ has flat normal bundle at
a point $x\in M^m$ if and only if the shape operators
\[
\{A_\xi: \xi\in T_xM^\perp\}
\]
are simultaneously diagonalizable. Equivalently, if and only if there exists
an orthonormal basis $\{X_1,\ldots,X_m\}$ of $T_xM$ such that
\[
\alpha_f(X_i,X_j) = 0, \ 1\leq i\neq j\leq m.
\]
\end{proposition}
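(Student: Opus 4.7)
The plan is to derive both equivalences directly from the Ricci equation \eqref{eq:Ricci2}, combined with the spectral theorem for commuting self-adjoint operators. The key observation is that in \eqref{eq:Ricci} no ambient-curvature term appears on the right-hand side: a direct check using that the curvature tensor of $\Qe$ involves only contractions with the vectors themselves and with $\dt$, together with the orthogonality $\pl X,\xi\pr=0$ and $\pl Y,\zeta\pr=0$ for $X,Y$ tangent and $\xi,\zeta$ normal, shows that the ambient contribution to $\pl\bar R(X,Y)\xi,\zeta\pr$ vanishes identically. Hence the normal curvature reduces to the purely intrinsic formula
\[
\pl R^\perp(X,Y)\xi,\zeta\pr=\pl[A_\xi,A_\zeta]X,Y\pr
\]
at the point $x$, exactly as in the space-form case.

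With this in hand, the first equivalence follows immediately: $R^\perp=0$ at $x$ if and only if $\pl[A_\xi,A_\zeta]X,Y\pr=0$ for every $X,Y\in T_xM$ and every $\xi,\zeta\in T_xM^\perp$, i.e. if and only if all the shape operators $\{A_\xi:\xi\in T_xM^\perp\}$ pairwise commute. Since each $A_\xi$ is a self-adjoint endomorphism of the finite-dimensional inner product space $T_xM$, a standard induction on dimension using orthogonal decomposition into common eigenspaces (the spectral theorem for commuting self-adjoint families) yields an orthonormal basis $\{X_1,\ldots,X_m\}$ of $T_xM$ simultaneously diagonalizing the whole family.

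To obtain the third characterization I would read off the matrix entries of $A_\xi$ in such a basis. If $A_\xi X_i=\lambda_i(\xi)X_i$ for every $\xi$, then
\[
\pl\alpha_f(X_i,X_j),\xi\pr=\pl A_\xi X_i,X_j\pr=\lambda_i(\xi)\,\delta_{ij},
\]
so $\alpha_f(X_i,X_j)=0$ for $i\neq j$. Conversely, if an orthonormal basis with $\alpha_f(X_i,X_j)=0$ for $i\neq j$ exists, the same identity shows that every $A_\xi$ is diagonal in $\{X_i\}$, hence the family is simultaneously diagonalizable and commuting, which closes the loop.

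The only nontrivial point in the argument is the verification that the ambient curvature of $\Qe$ does not contaminate the Ricci equation, and this is already absorbed into the statement of \eqref{eq:Ricci}; everything else is the classical commuting-self-adjoint-operators machinery.
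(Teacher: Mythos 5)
Your argument is correct and follows essentially the same route as the paper, which simply observes that by the Ricci equation \eqref{eq:Ricci2} the vanishing of $R^\perp$ at $x$ is equivalent to the commutativity of all shape operators, and then concludes by the spectral theorem for commuting self-adjoint operators. Your additional verification that the ambient curvature of $\Qe$ contributes nothing to the normal component is a worthwhile detail the paper leaves implicit in the statement of \eqref{eq:Ricci}.
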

\begin{proof}
By the Ricci equation \eqref{eq:Ricci2}, the normal curvature tensor
$R^\perp$ vanishes at $x\in M^m$ if and only if all shape operators
$A_\xi$, $\xi\in T_xM^\perp$, commute, and the conclusion follows.
\end{proof}

It follows from Proposition \ref{prop:FlatNormal} that, at each $x\in M$
where $R^\perp(x)=0$, the tangent space $T_xM$ decomposes 
orthogonally as
\begin{equation} \label{eq:DecompOrt}
T_xM=E_1(x)\oplus\cdots\oplus E_{s(x)}(x).
\end{equation}
This decomposition has the property that for each $\xi\in T_xM^\perp$,
there exist real numbers $\lambda_i(\xi)$, $1\leq i\leq s=s(x)$, such
that
\[
A_\xi\vert_{E_i(x)}=\lambda_i(\xi)\text{Id}
\]
and the maps $\xi\mapsto\lambda_i(\xi)$ are pairwise distinct. Since 
such maps are linear, there exist unique pairwise distinct vectors
$\xi_i(x)\in T_xM^\perp$, $1\leq i\leq s$, called the {\em principal
normals} of $f$ at $x$, such that
\[
\lambda_i(\xi)=\pl\xi_i(x),\xi\pr, \quad 1\leq i\leq s.
\]
Therefore, denoting by $E_i(x)=E_{\xi_i(x)}$, we have
\[
E_{\xi_i(x)}=\{X\in T_xM:\alpha_f(X,Y)=\pl X,Y\pr\xi_i(x), \ \forall \ Y\in T_xM\},
\]
and the second fundamental form of $f$ has the simple representation
\begin{equation} \label{eq:AlphaProj}
\alpha(X,Y)=\sum_{i=1}^{s}\pl X^i,Y^i\pr\xi_i(x),
\end{equation}
where $X\mapsto X^i$ is the orthogonal projection onto $E_i(x)$.
Equivalently,
\begin{equation} \label{eq:AProj}
A_\xi X = \sum_{i=1}^{s}\pl\xi,\xi_i(x)\pr X^i
\end{equation}
for all $X\in T_xM$ and $\xi\in T_xM^\perp$. 

\begin{remark}
Throughout this paper a submanifold with flat normal bundle will 
always be considered {\em proper}, that is, it has a constant number
$s=s(x)$ of principal normals vector fields. In this case, the principal
normal vector fields $x\in M^m\mapsto\xi_i(x)$, $1\leq i\leq s$, are
smooth. Moreover, the distributions $x\in M^m\mapsto E_i(x)$,
$1\leq i\leq s$, have constant dimension and are also smooth.
\end{remark}

\vspace{.2cm}

A direct calculation shows the following

\begin{lemma}
Let $f\co M^m\to\Qe$ be an isometric immersion with flat normal
bundle. Then, the Codazzi equation \eqref{eq:Codazzi} can be written 
as
\begin{equation} \label{eq:Codazziflat}
\langle X_j, Y_j \rangle (\nabla^{\perp}_{X_i}\xi_j+\epsilon\langle X_i,T\rangle\eta)=\langle \nabla_{X_j}Y_j,X_i \rangle (\xi_j-\xi_i), 
\end{equation}
for $1\leq i\neq j\leq s$, $ X_i\in \Gamma(E_i)$, and 
$X_j, Y_j \in \Gamma(E_j)$, or
\begin{equation} \label{eq:Codazziflat2}
\langle \nabla_{X_j}X_i,X_k\rangle(\xi_i-\xi_k)=\langle \nabla_{X_i}X_j,X_k
\rangle (\xi_j-\xi_k), 
\end{equation}
for $1\leq i\neq j \neq k \leq s$, $X_i\in \Gamma(E_i)$, 
$X_j\in \Gamma(E_j)$, and $X_k\in \Gamma(E_k)$.
\end{lemma}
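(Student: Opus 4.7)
The plan is to evaluate the Codazzi equation \eqref{eq:Codazzi} on triples of vector fields chosen from the eigenspace decomposition $T_xM = E_1 \oplus \cdots \oplus E_s$ of Proposition \ref{prop:FlatNormal}, using the fact that whenever $B \in \Gamma(E_j)$ the representation \eqref{eq:AlphaProj} collapses to $\alpha(A, B) = \langle A, B\rangle \xi_j$ for every $A \in \Gamma(TM)$. This makes every expansion of $\nabla^\perp_X \alpha$ reduce to a handful of terms involving only the covariant derivatives $\nabla^\perp_{X_i}\xi_j$ of the principal normals and the connection coefficients $\langle \nabla_{X_a}X_b, X_c\rangle$.

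To obtain \eqref{eq:Codazziflat}, I would substitute $X = X_i$, $Y = X_j$, $Z = Y_j$ with $i \neq j$. Expanding $(\nabla^\perp_{X_i}\alpha)(X_j, Y_j)$ by the product rule, the two terms coming from differentiating $\langle X_j, Y_j\rangle$ cancel exactly against $\alpha(\nabla_{X_i}X_j, Y_j)$ and $\alpha(X_j, \nabla_{X_i}Y_j)$, leaving only $\langle X_j, Y_j\rangle \nabla^\perp_{X_i}\xi_j$. The symmetric term $(\nabla^\perp_{X_j}\alpha)(X_i, Y_j)$ has vanishing leading piece since $\alpha(X_i, Y_j) = 0$; its remaining two contributions, one along $\xi_i$ and one along $\xi_j$, combine via $X_j\langle X_i, Y_j\rangle = 0$ into $\langle \nabla_{X_j}Y_j, X_i\rangle(\xi_j - \xi_i)$. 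The right-hand side of \eqref{eq:Codazzi} simplifies to $-\epsilon\langle X_i, T\rangle\langle X_j, Y_j\rangle \eta$, and collecting the three pieces yields \eqref{eq:Codazziflat}.

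For \eqref{eq:Codazziflat2}, I would take $X_i, X_j, X_k$ in pairwise distinct eigenspaces. Now $\alpha(X_j, X_k) = 0$, so each covariant derivative of $\alpha$ reduces to two terms, and the right-hand side of \eqref{eq:Codazzi} vanishes because $(X_i \wedge X_j)T \in \spa\{X_i, X_j\}$ is orthogonal to $X_k$. Using $X_a\langle X_b, X_c\rangle = 0$ for $b \neq c$ to replace pairs of terms by single ones and rearranging produces exactly \eqref{eq:Codazziflat2}.

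There is no genuine difficulty here; the whole argument is a direct computation. The only point requiring care is, when expanding $\alpha(\nabla_{X_a}X_b, X_c)$, tracking which $E_l$-projections of $\nabla_{X_a}X_b$ actually survive the pairing with $X_c$, so that the orthogonality identities are applied at the right moment and the $\xi_i$-, $\xi_j$-, $\xi_k$-components line up with the stated identities.
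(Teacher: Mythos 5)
Your computation is correct and is exactly the ``direct calculation'' the paper alludes to without writing out: substituting $(X,Y,Z)=(X_i,X_j,Y_j)$ and $(X_i,X_j,X_k)$ into the Codazzi equation and simplifying via $\alpha(A,B)=\langle A,B\rangle\xi_j$ for $B\in\Gamma(E_j)$ yields \eqref{eq:Codazziflat} and \eqref{eq:Codazziflat2} precisely as you describe. No gaps; the cancellations you identify (the metric-derivative terms against the $\alpha(\nabla\cdot,\cdot)$ terms, and the vanishing of $\epsilon\langle(X_i\wedge X_j)T,X_k\rangle$) are the right ones.
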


Following the notations of \cite{MeTo}, we denote by $\A$ the class of 
isometric immersions $f\co M^m\to\Qe$ with the property that $T$ 
is an eigenvector of all shape operators of $f$. Our next result shows
a relationship between isometric immersions into $\Qe$ which have
flat normal bundle and those that are in class $\A$.

\begin{proposition} \label{prop:ClassA}
Let $f\co M^m\to\Qe$ be an isometric immersion with flat normal
bundle. Then, $f$ is in class $\A$ if and only if the tangent vector
field $T$ belongs to some $E_i$ in the decomposition 
\eqref{eq:DecompOrt}.
\end{proposition}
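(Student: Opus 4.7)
The plan is to exploit the explicit normal form \eqref{eq:AProj} for shape operators of a submanifold with flat normal bundle, namely $A_\xi X = \sum_{i=1}^s \pl \xi,\xi_i\pr X^i$, where $X \mapsto X^i$ denotes orthogonal projection onto the $i$-th common eigenspace $E_i$. The strategy is simply to decompose $T$ along \eqref{eq:DecompOrt} and read off what the class $\A$ condition imposes on its components.

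For the easier implication, suppose $T(x) \in E_i(x)$ for some index $i$. Then $T^j = 0$ for $j \neq i$ and $T^i = T$, so \eqref{eq:AProj} immediately gives $A_\xi T = \pl \xi,\xi_i\pr T$ for every $\xi \in T_xM^\perp$. Hence $T$ is an eigenvector of every shape operator of $f$ at $x$, which is precisely the class $\A$ condition.

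For the converse, assume $f$ is in class $\A$ and write $T = \sum_{i=1}^s T^i$ with $T^i \in E_i$. Fix $\xi \in T_xM^\perp$ and let $\mu(\xi)$ denote the eigenvalue of $A_\xi$ on $T$. Using \eqref{eq:AProj} and comparing components along the orthogonal decomposition $E_1 \oplus \cdots \oplus E_s$ yields
\[
\pl \xi,\xi_i\pr T^i = \mu(\xi)\,T^i, \qquad 1 \leq i \leq s.
\]
If two distinct indices $i \neq j$ had $T^i, T^j \neq 0$, then $\pl \xi,\xi_i\pr = \mu(\xi) = \pl \xi,\xi_j\pr$ would hold for every $\xi \in T_xM^\perp$, forcing $\xi_i = \xi_j$ and contradicting the pairwise distinctness of the principal normals. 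Therefore at most one $T^i$ is nonzero, so $T \in E_i$ for some $i$; the degenerate case $T(x)=0$ is trivial since $0$ lies in every $E_i$.

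I do not anticipate any genuine obstacle: once the simultaneous diagonalization in the form \eqref{eq:AProj} is available, the statement reduces to the elementary observation that a vector which is a common eigenvector of a family of operators whose joint eigenspace decomposition is given must lie inside a single joint eigenspace. The only mild point worth noting is the uniformity across all $\xi$, which is precisely what the pairwise distinctness of the $\xi_i$ delivers.
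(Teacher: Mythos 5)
Your proposal is correct and follows essentially the same argument as the paper: the converse uses \eqref{eq:AProj} to read off $A_\xi T=\pl\xi,\xi_i\pr T$ when $T\in E_i$, and the forward direction compares components of $A_\xi T$ along the $E_i$'s and rules out two nonzero components via the pairwise distinctness of the principal normals (the paper does this by the specific choices $\xi=\xi_i$ and $\xi=\xi_j$, you by nondegeneracy of the inner product on the normal space, which is the same point).
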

\begin{proof}
If $f$ belongs to class $\A$, then $A_\xi T=\lambda T$ for every
normal vector field $\xi\in\G(TM^\perp)$, where $\lambda=\lambda(\xi)$
is a smooth function along $M$. On the other hand, it follows from
\eqref{eq:AProj} that
\begin{equation} \label{eq:AProjT}
A_\xi T = \sum_{i=1}^{s}\pl\xi,\xi_i\pr T^i.
\end{equation}
Taking the inner product with $T^i\neq0$ of both sides of \eqref{eq:AProjT}
yields
\begin{equation} \label{eq:AProjT2}
\lambda\|T^i\|^2 = \pl\xi,\xi_i\pr\|T^i\|^2.
\end{equation}
Suppose that there exists another index $j\neq i$ such that $T^j\neq0$.
It follows from \eqref{eq:AProjT2} that $\pl\xi,\xi_i\pr=\pl\xi,\xi_j\pr$. In
particular, taking $\xi=\xi_i$ and then $\xi=\xi_j$, we obtain 
$\|\xi_i\|^2=\pl\xi_i,\xi_j\pr=\|\xi_j\|^2$. This implies that $\xi_i=\xi_j$,
which is a contradiction. Conversely, suppose that $T\in E_i$, for
some $1\leq i\leq s$. Thus
\[
\alpha(T,X) = \pl T,X\pr\xi_i,
\]
for every $X\in TM$. Therefore, given $\xi\in\G(TM^\perp)$, we have
$A_\xi T=\pl\xi,\xi_i\pr T$, and this shows that $T$ is an eigenvector
of $A_\xi$, for any $\xi\in\G(TM^\perp)$.
\end{proof}

\begin{remark}
Under the hypotheses of Proposition \ref{prop:ClassA}, we can
suppose, without loss of generality, that $T\in E_1$.
\end{remark}

\section{A basic result} \label{sec:Ricci}

Given an isometric immersion $f\co M^m\to\Qe$, we recall that the
{\em Ricci tensor} of $M^m$ is defined by
\[
\Ric(X,Y) = tr\{Z\mapsto R(Z,X)Y\}
\]
for every $X,Y\in TM$. In the next result we compute the Ricci
tensor of $M^m$ in terms of the second fundamental form of $f$.

\begin{lemma}
The Ricci tensor of an isometric imersion $f\co M^m\to\Qe$ is given
by
\begin{eqnarray} \label{eq:RicciTensor}
\begin{aligned}
\Ric(X,Y)  = & \ \e(m-1-\|T\|^2)\pl X,Y\pr + \e(2-m)\pl X,T\pr\pl Y,T\pr \\
& + m\pl H,\alpha(X,Y)\pr - III(X,Y),
\end{aligned}
\end{eqnarray}
where
\[
III(X,Y) = \sum_{i=1}^m\pl\alpha(X,X_i),\alpha(Y,X_i)\pr
\]
denotes the third fundamental form of $f$ in terms of an orthonormal 
tangent frame $\{X_1,\ldots,X_m\}$ of $M^m$.
\end{lemma}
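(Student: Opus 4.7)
The plan is to compute the Ricci tensor directly by tracing the Gauss equation \eqref{eq:Gauss}. Fix an orthonormal frame $\{X_1,\ldots,X_m\}$ of $T_xM$ and write
\[
\Ric(X,Y) = \sum_{i=1}^m \pl R(X_i,X)Y, X_i\pr.
\]
Substituting the right-hand side of \eqref{eq:Gauss} yields five separate sums: two coming from the shape operator terms $A_{\alpha(X,Y)}X_i - A_{\alpha(X_i,Y)}X$, and three coming from the ambient curvature operator $\e\bigl(X_i\wedge X + \pl X_i,T\pr X\wedge T - \pl X,T\pr X_i\wedge T\bigr)Y$.

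First, I would handle the shape operator piece. Using the symmetry $\pl A_\eta U,V\pr = \pl\alpha(U,V),\eta\pr$, one immediately gets
\[
\sum_i\pl A_{\alpha(X,Y)}X_i,X_i\pr = \pl\alpha(X,Y),\sum_i\alpha(X_i,X_i)\pr = m\pl H,\alpha(X,Y)\pr
\]
and
\[
\sum_i\pl A_{\alpha(X_i,Y)}X,X_i\pr = \sum_i\pl\alpha(X,X_i),\alpha(Y,X_i)\pr = III(X,Y).
\]
This produces the $m\pl H,\alpha(X,Y)\pr - III(X,Y)$ part of the formula.

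Next, I would expand each ambient curvature term using $(U\wedge V)W = \pl V,W\pr U - \pl U,W\pr V$ and use the trivial identities $\sum_i\pl X_i,X_i\pr = m$ and $\sum_i\pl U,X_i\pr\pl X_i,V\pr = \pl U,V\pr$. The $\e(X_i\wedge X)Y$ term contributes $(m-1)\pl X,Y\pr$; the $\e\pl X_i,T\pr(X\wedge T)Y$ term contributes $\pl X,T\pr\pl Y,T\pr - \|T\|^2\pl X,Y\pr$; and the $-\e\pl X,T\pr(X_i\wedge T)Y$ term contributes $-(m-1)\pl X,T\pr\pl Y,T\pr$. Collecting these gives $\e(m-1-\|T\|^2)\pl X,Y\pr + \e(2-m)\pl X,T\pr\pl Y,T\pr$, matching the desired expression.

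There is no real obstacle here; the computation is a straightforward bookkeeping exercise, and the only place to be cautious is in correctly accounting for the sign coming from the order of indices $R(X_i,X)Y$ versus $R(X,Y)Z$ as written in \eqref{eq:Gauss}, so that both shape operator traces and all three $T$-dependent pieces carry the right signs when the final expression is assembled.
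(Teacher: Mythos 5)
Your proposal is correct and follows essentially the same route as the paper: both trace the Gauss equation \eqref{eq:Gauss} over an orthonormal frame, identify $\sum_i\pl A_{\alpha(X,Y)}X_i,X_i\pr = m\pl H,\alpha(X,Y)\pr$ and $\sum_i\pl A_{\alpha(X_i,Y)}X,X_i\pr = III(X,Y)$, and collect the three $\e$-terms into $\e(m-1-\|T\|^2)\pl X,Y\pr + \e(2-m)\pl X,T\pr\pl Y,T\pr$. All individual contributions you list check out, so there is nothing to add.
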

\begin{proof}
Given an orthonormal tangent frame $\{X_1,\ldots,X_m\}$ of $M^m$,
the Gauss equation of $f$ yields
\begin{eqnarray*}
\Ric(X,Y)& = & \sum_{i=1}^{m}\langle R(X_i,X)Y,X_i\rangle \\
& = & \sum_{i=1}^{m}\big[\langle \epsilon(X_i\wedge X)Y
-\epsilon\langle X,T\rangle (X_i\wedge T)Y
+\epsilon \langle X_i,T\rangle (X\wedge T)Y \\
& & +\langle \alpha(X_i,X_i),\alpha(X,Y) \rangle
- \langle \alpha(X,X_i),\alpha(Y,X_i) \rangle\big] \\
& = & \epsilon(m-1)\langle X,Y \rangle-\epsilon\langle X,T \rangle
\big(m\langle Y,T \rangle-\langle Y,T \rangle\big)+\epsilon\langle Y ,T \rangle
\langle X,T\rangle \\
&& -\epsilon\langle X,Y \rangle\|T\|^2 
+ m\langle H,\alpha(X,Y) \rangle-III(X,Y) \\
& = & \epsilon\big(m-1-\|T\|^2\big)\langle X,Y \rangle+\epsilon(2-m)
\langle X,T \rangle\langle Y,T \rangle \\
&& +m\langle H, \alpha(X,Y)\rangle-III(X,Y),
\end{eqnarray*}
and the proof of \eqref{eq:RicciTensor} is completed.
\end{proof}

\begin{remark}
The formula for the Ricci tensor in \eqref{eq:RicciTensor} extends 
the one obtained in \cite{LePiSa} for the case of hypersurfaces.
\end{remark}

The {\em Ricci curvature} in the direction of a unit vector $X\in TM$
is defined as
\[
\Ric(X) = \frac{1}{m-1}\Ric(X,X).
\]
It follows from \eqref{eq:RicciTensor} that
\begin{eqnarray} \label{eq:RicciCurvTen}
\begin{aligned}
\Ric(X)  = & \ \e - \frac{\e}{m-1}\big(\|T\|^2-\pl T,X\pr^2\big) - \e\pl T,X\pr^2 \\
& + \frac{m}{m-1}\pl H,\alpha(X,X)\pr - \frac{1}{m-1}III(X,X),
\end{aligned}
\end{eqnarray}
for any unit vector $X\in TM$. Equation \eqref{eq:RicciCurvTen} yields
the following obstruction for the existence of a minimal submanifold
into any product space $\Qe$.

\begin{corollary}
Let $f\co M^m\to\Qe$ be a minimal isometric immersion. Then for any 
point $x\in M$, we have $\Ric(X)\leq\e$ for every unit vector $X\in T_xM$.
Moreover, the equality holds identically if and only if $f(M)$ is an open 
subset of a slice $N^m\times\{t_0\}$, where $N^m$ is a submanifold of
$\Qe$.
\end{corollary}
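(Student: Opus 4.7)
The plan is to work directly from equation~\eqref{eq:RicciCurvTen}. Setting $H=0$ removes the $\pl H,\alpha(X,X)\pr$ term, and I would regroup the two $T$-dependent contributions as
\[
-\frac{\e}{m-1}\bigl(\|T\|^2-\pl T,X\pr^2\bigr)-\e\pl T,X\pr^2
= -\frac{\e}{m-1}\bigl(\|T\|^2+(m-2)\pl T,X\pr^2\bigr),
\]
so that a minimal isometric immersion satisfies
\[
\Ric(X)-\e=-\frac{\e}{m-1}\bigl(\|T\|^2+(m-2)\pl T,X\pr^2\bigr)-\frac{1}{m-1}III(X,X).
\]
In the relevant case $\e=1$, that is $\Se$, each summand on the right is nonpositive because $III(X,X)=\sum_i\|\alpha(X,X_i)\|^2\geq 0$ and $m\geq 2$, and the inequality $\Ric(X)\leq\e$ follows at once for every unit vector $X\in T_xM$.

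For the equality statement, assume $\Ric(X)=\e$ holds identically, i.e.\ at every point and for every unit tangent vector. Then each of the three nonpositive summands above must vanish. Vanishing of the first forces $\|T\|\equiv 0$, and vanishing of $III(X,X)$ for every unit $X$ forces $\alpha\equiv 0$. From \eqref{eq:dt} the tangential field $T$ is precisely the gradient of the height function $h=t\circ f$, so $T\equiv 0$ together with connectedness of $M$ gives $h\equiv t_0$ for some $t_0\in\R$, and hence $f(M)\subset\Q^n_\e\times\{t_0\}$. Taking $N^m=f(M)$ inside this slice yields the stated characterization. The reverse implication is a direct substitution: if $f(M)$ lies in a slice then $T\equiv 0$, and \eqref{eq:RicciCurvTen} collapses to $\Ric(X)=\e-\tfrac{1}{m-1}III(X,X)$, which equals $\e$ identically exactly when $\alpha\equiv 0$ on the slice.

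The only nontrivial step is the algebraic regrouping of the two $T$-terms; everything else is sign bookkeeping plus the observation that $T=\mathrm{grad}(t\circ f)$. I would also flag that the inequality genuinely uses the sign of $\e$: in the formula $-\e/(m-1)$ for $\e=-1$ would turn the $T$-contribution positive, so the corollary is essentially a statement about the spherical factor case $\Se$.
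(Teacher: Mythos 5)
Your argument is correct and follows the paper's own proof essentially verbatim: set $H=0$ in \eqref{eq:RicciCurvTen}, observe that the remaining terms are nonpositive, and note that equality forces $T\equiv 0$ and $\alpha\equiv 0$ (so $f(M)$ is an open, totally geodesic piece of a slice); your regrouping of the two $T$-terms is only a cosmetic variant of the paper's remark that $\|T\|^2-\pl T,X\pr^2\geq 0$. Your closing observation is also well taken: the inequality $\Ric(X)\leq\e$ genuinely requires $\e=1$, a point the paper's proof of this corollary glosses over even though the statement is phrased for general $\Qee$.
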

\begin{proof}
Since $f$ is minimal, we have
\[
\Ric(X)  =  \e - \frac{\e}{m-1}\big(\|T\|^2-\pl T,X\pr^2\big) - \e\pl T,X\pr^2 
- \frac{1}{m-1}III(X,X),
\]
and this proves the first statement, since $\|T\|^2-\pl X,T\pr^2\geq0$.
The equality holds identically if and only if $f$ is totally geodesic and
$T=0$, that is, $f(M)$ is an open subset of a slice of $\Qe$.
\end{proof}

\begin{corollary}
Let $f\co M^m_c\to\Qe$ be a minimal isometric immersion.
Then:
\begin{enumerate}
\item[(a)] $c\leq\e$,
\item[(b)] $c=\e$ if and only if $f(M)$ is an open subset of a slice 
in $\Qe$.
\end{enumerate}
\end{corollary}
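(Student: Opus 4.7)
My plan is to derive both parts directly from the preceding corollary by using that a space of constant sectional curvature $c$ satisfies $\Ric(X)=c$ for every unit tangent vector $X$. For (a), since $M^m_c$ has constant sectional curvature $c$, the preceding corollary gives $c=\Ric(X)\leq\e$ at every point and in every unit direction, and the conclusion follows immediately.

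For the forward direction of (b), if $c=\e$ then the bound $\Ric(X)\leq\e$ is saturated identically, so the equality clause of the preceding corollary forces $f(M)$ to be an open subset of a slice of $\Qe$. For the converse direction, I will use the fact, made explicit in the proof of the preceding corollary, that ``$f(M)$ is an open subset of a slice'' is shorthand for $f$ being totally geodesic with vanishing tangent component $T$. Plugging $\alpha=0$ and $T=0$ into the Gauss equation \eqref{eq:Gauss} collapses it to $R(X,Y)Z=\e(X\wedge Y)Z$, which shows that $M$ has constant sectional curvature $\e$, whence $c=\e$.

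I do not foresee a substantive obstacle: both parts reduce to a one-line application of the preceding corollary together with a direct use of the Gauss equation. The only point requiring mild care is to interpret the word ``slice'' consistently with the preceding statement, namely as a totally geodesic slice $N^m\times\{t_0\}$ into which $f$ maps $M$ as a totally geodesic immersion; once this dictionary is fixed, the argument is essentially tautological.
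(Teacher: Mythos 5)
Your argument is correct and is exactly the intended one: the paper states this corollary without proof as an immediate consequence of the preceding one, via $\Ric(X)=c$ for a unit vector in a manifold of constant curvature $c$, with the converse of (b) coming from the Gauss equation with $\alpha=0$ and $T=0$. Your remark about reading ``slice'' as in the equality case of the preceding corollary (i.e., $f$ totally geodesic with $T=0$) matches the paper's own usage there and is indeed needed for the converse direction to hold.
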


The next result provides a characterization of Einstein submanifolds
with flat normal bundle, extending the one of \cite{LePiSa} for the
case of hypersurfaces.

\begin{theorem}
Let $f\co M^m\to\Qe$ be an Einstein submanifold, $m\geq3$, with flat 
normal bundle at a point $x\in M$. If $T\neq0$ at $x\in M$, then $T$ 
is an eigenvector of all shape operators of $f$ at $x$.
\end{theorem}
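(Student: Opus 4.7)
My plan is to exploit the Ricci formula \eqref{eq:RicciTensor} together with the orthogonal decomposition $T_xM = E_1 \oplus \cdots \oplus E_s$ provided by flatness of the normal bundle (Proposition \ref{prop:FlatNormal}). By Proposition \ref{prop:ClassA}, it suffices to show that $T$ lies entirely in a single eigenspace $E_i$; equivalently, writing $T = \sum_{i=1}^s T^i$ with $T^i \in E_i$, at most one of the components $T^i$ is nonzero.

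The strategy is to argue by contradiction: assume that there exist indices $i \neq j$ with $T^i \neq 0$ and $T^j \neq 0$, and test the Einstein condition on the orthogonal pair $(T^i, T^j)$. Since $\pl T^i, T^j \pr = 0$, the Einstein hypothesis gives $\Ric(T^i, T^j) = 0$. On the other hand, I plan to show that the remaining terms of \eqref{eq:RicciTensor} applied to $(T^i, T^j)$ all vanish except the $\e(2-m)\pl T^i, T\pr \pl T^j, T\pr$ term. Indeed, by \eqref{eq:AlphaProj} one has $\alpha(T^i, T^j) = \sum_k \pl (T^i)^k, (T^j)^k \pr \xi_k = 0$ because $(T^i)^k$ is nonzero only for $k=i$ and $(T^j)^k$ only for $k=j$. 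Choosing the orthonormal frame $\{X_1,\ldots,X_m\}$ in \eqref{eq:RicciTensor} adapted to the decomposition $E_1 \oplus \cdots \oplus E_s$, the same kind of index-disjointness computation gives $III(T^i, T^j) = 0$, since each $\alpha(T^i, X_l)$ is a multiple of $\xi_i$ coming only from frame vectors in $E_i$, while each $\alpha(T^j, X_l)$ comes only from frame vectors in $E_j$.

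Putting this together, the Einstein equation reduces to
\[
0 = \Ric(T^i, T^j) = \e(2-m)\,\|T^i\|^2 \|T^j\|^2.
\]
Since $m \geq 3$ and $\e = \pm 1$, the factor $\e(2-m)$ is nonzero, forcing $\|T^i\|^2 \|T^j\|^2 = 0$, contradicting the assumption that both components are nonzero. Hence $T \in E_i$ for a single index $i$, and Proposition \ref{prop:ClassA} finishes the argument.

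I do not expect a serious obstacle here: the proof is essentially a short computation once one chooses the right test vectors. The one subtlety to verify carefully is the simultaneous vanishing of $\alpha(T^i, T^j)$ and $III(T^i, T^j)$, which requires that the orthonormal frame be chosen compatible with the eigenspace decomposition; this is always possible because the $E_k$ are mutually orthogonal. The hypothesis $m \geq 3$ enters precisely to ensure $\e(2-m) \neq 0$, which is what makes the cross term in \eqref{eq:RicciTensor} genuinely detect a splitting of $T$.
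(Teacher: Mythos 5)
Your argument is correct and is essentially the paper's own proof: both test the Einstein condition against formula \eqref{eq:RicciTensor} on an orthogonal pair adapted to the simultaneous diagonalization (the paper uses off-diagonal pairs $(X_i,X_j)$ of a diagonalizing frame and the components $t_i$ of $T$, you use the projections $T^i,T^j$ onto distinct common eigenspaces), and in both cases the vanishing of the off-diagonal $\alpha$ and $III$ terms reduces everything to $\e(2-m)t_it_j=0$, with $m\geq3$ forcing $T$ into a single eigenspace. The only cosmetic difference is your appeal to Proposition \ref{prop:ClassA} at the end, which is not needed pointwise since $T\in E_i(x)$ already means $A_\xi T=\pl\xi,\xi_i\pr T$ for all $\xi$.
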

\begin{proof}
By Proposition \ref{prop:FlatNormal}, there exists an orthonormal basis
$\{X_1,\ldots,X_m\}$ of $T_xM$ such that
\[
\alpha_f(X_i,X_j) = 0, \ 1\leq i\neq j\leq m.
\]
Write $T=\sum_{k=1}^mt_kX_k$ at the point $x$. On the other hand, 
since $M$ is an Einstein manifold one has
\begin{equation} \label{eq:RicciA1}
\Ric(X_i,X_j) = \rho\delta_{ij},
\end{equation}
for some $\rho\in\R$. That is,
\[
\Ric(X_i,X_j) = \sum_{k=1}^m\pl R(X_k,X_i)X_j,X_k\pr = \rho\delta_{ij}.
\]
When applying the Ricci tensor to the referencial $\{X_1,\ldots,X_m\}$,
we obtain
\begin{eqnarray} \label{eq:RicciA2}
\begin{aligned}
\Ric(X_i,X_j)  = & \ \big(\e(m-1-\|T\|^2+m\lambda_i(H)\big)\delta_{ij} \\
& + \e(2-m)t_it_j - III(X_i,X_j).
\end{aligned}
\end{eqnarray}
It follows from \eqref{eq:RicciA1} and \eqref{eq:RicciA2} that
\begin{eqnarray} \label{eq:RicciA3}
\big(\e(m-1-\|T\|^2+m\lambda_i(H)-\rho\big)\delta_{ij} + \e(2-m)t_it_j 
- III(X_i,X_j) = 0.
\end{eqnarray}
Note that the third fundamental form of $f$, expressed in terms of
the basis $\{X_1,\ldots,X_m\}$, is always equal to zero, for $i\neq j$.
Thus, it follows from \eqref{eq:RicciA3} that
\[
t_it_j = 0, \ 1\leq i\neq j\leq m.
\]
Since $T(x)\neq0$, there is only one index $1\leq k\leq m$, with
$t_k\neq0$, and this implies that $T=t_kX_k$ at $x\in M$.
\end{proof}

\section{The proof of the main result}

Let $f\co M^m\to\Qe$, $m\geq3$, be an isometric immersion of a 
connected Einstein manifold with Ricci curvature $\lambda$, flat normal 
bundle, and parallel mean curvature vector field $H$. In order to prove
Theorem \ref{theo:main}, we will show a sequence of auxiliary results.

\begin{lemma} \label{lemanormaxik-h} 
If the tangent vector field $T$ is nowhere vanishing, then
\begin{equation} \label{eq:xi_normal}
\left\|\xi_k-\frac{m}{2}H\right\|^2 = \frac{m^2}{4}\left\|H\right\|^2 - \lambda
+ (m-2)\epsilon+\|\eta\|^2\epsilon,
\end{equation}
for any $2\leq k\leq s$, and for $k=1$ if $\dim E_1>1$.
\end{lemma}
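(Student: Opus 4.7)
The plan is to apply the Ricci tensor formula \eqref{eq:RicciTensor} to a carefully chosen unit vector $X$ in the eigenspace $E_k$, and exploit the Einstein condition together with the decomposition \eqref{eq:DecompOrt} provided by the flat normal bundle.

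First, since $T$ is nowhere vanishing, the previous theorem gives that $T$ is an eigenvector of every shape operator, so $f$ belongs to class $\A$. By Proposition \ref{prop:ClassA} (and the subsequent remark), we may assume $T\in E_1$. Next, for $k\geq 2$ pick any unit vector $X\in E_k$; for $k=1$ under the hypothesis $\dim E_1>1$, pick a unit vector $X\in E_1$ with $X\perp T$. In either case $\langle X,T\rangle=0$. Complete $X$ to an orthonormal frame $\{X_1,\dots,X_m\}$ of $T_xM$ adapted to the decomposition $E_1\oplus\cdots\oplus E_s$, with $X_1=X$.

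Using the representation \eqref{eq:AlphaProj} together with the orthogonality of the $E_i$'s, one has $\alpha(X,X)=\xi_k$ and $\alpha(X,X_j)=0$ for all $j\neq 1$ (since then either $X_j\notin E_k$ or $X_j\in E_k$ but $X_j\perp X$). Consequently
\[
III(X,X)=\sum_{j=1}^m\|\alpha(X,X_j)\|^2=\|\xi_k\|^2.
\]
Substituting into \eqref{eq:RicciTensor} with $X=Y$, and recalling that $\langle X,T\rangle=0$ and $\Ric(X,X)=\lambda$ by the Einstein hypothesis, we obtain
\[
\lambda=\epsilon(m-1-\|T\|^2)+m\langle H,\xi_k\rangle-\|\xi_k\|^2.
\]

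To finish, note that decomposing $\partial_t=f_\ast T+\eta$ and using that $\partial_t$ is unit and the decomposition is orthogonal yields $\|T\|^2+\|\eta\|^2=1$, hence $\epsilon(m-1-\|T\|^2)=\epsilon(m-2)+\epsilon\|\eta\|^2$. Rearranging the displayed identity gives
\[
\|\xi_k\|^2-m\langle\xi_k,H\rangle=\epsilon(m-2)+\epsilon\|\eta\|^2-\lambda,
\]
and completing the square by adding $\tfrac{m^2}{4}\|H\|^2$ to both sides produces exactly \eqref{eq:xi_normal}. The only subtlety — and the reason the case $k=1$ requires $\dim E_1>1$ — is the need to choose a unit vector $X$ in $E_k$ that is orthogonal to $T$, so that the cross term $\epsilon(2-m)\langle X,T\rangle^2$ in the Ricci tensor formula vanishes; when $k=1$ and $\dim E_1=1$, the only unit direction in $E_1$ is $T/\|T\|$ itself, and the argument breaks down.
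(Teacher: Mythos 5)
Your proof is correct and takes essentially the same approach as the paper: both evaluate the Einstein condition on a unit vector of $E_k$ orthogonal to $T$ (which is exactly why the case $k=1$ needs $\dim E_1>1$) and then complete the square in $\xi_k-\frac{m}{2}H$. The only cosmetic difference is that you invoke the already-established Ricci formula \eqref{eq:RicciTensor} where the paper re-expands the Gauss equation over an adapted frame.
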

\begin{proof}
Let $X_1=T/\|T\|$, and consider a unit vector field 
$X_2\in\G(E_1)\cap\{T\}^{\perp}$. Complete to an orthonormal frame
$\{X_1,X_2,\ldots,X_m\}$ of $M$, where each $X_i$ belongs to some
$\G(E_i)$. It follows from Gauss equation \eqref{eq:Gauss}, for $k=1$,
that
\begin{eqnarray*}
\lambda & = & \Ric(X_2,X_2) = \sum_{i=1}^{m}\pl R(X_i,X_2)X_2,X_i\pr \\
& = & \epsilon\|\eta\|^2+\|\xi_1\|^2(\dim(E_1)-1)+(m-2)\epsilon +
\sum_{i=2}^{s} \pl\xi_i,\xi_2\pr\dim(E_i),
\end{eqnarray*}
that is,
\[
\|\xi_1\|^2\dim(E_1) + \sum_{i=2}^{s}\pl\xi_i,\xi_1\pr\dim(E_i) = 
\lambda - (m-2)\epsilon + \|\xi_1\|^2 - \|\eta\|^2\epsilon.
\]
On the other hand, we can write 
\begin{eqnarray} \label{eq:mxi_1}
\begin{aligned}
m\pl\xi_1,H\pr & =  |\xi_1\|^2 \dim(E_1) + \sum_{i=2}^{s}\pl\xi_i,\xi_1\pr
\dim(E_i) \\
& =  \lambda-(m-2)\epsilon-\|\eta\|^2\epsilon +\|\xi_1\|^2.
\end{aligned}
\end{eqnarray}
Therefore, using \eqref{eq:mxi_1}, we have
\begin{eqnarray*}
\left\|\xi_1-\frac{m}{2}H\right\|^2 &=&
\left\pl\xi_1-\frac{m}{2}H,\xi_1-\frac{m}{2}H\right\pr \\
&=& \|\xi_1\|^2 -m\pl\xi_1,H\pr +\frac{m^2}{4}\|H\|^2 \\
&=& \frac{m^2}{4}\|H\|^2-\lambda+(m-2)\epsilon+\|\eta\|^2\epsilon.
\end{eqnarray*}
The case $k\geq2$ is analogous to the previous case. In fact, it
follows from Gauss equation \eqref{eq:Gauss} that
\begin{eqnarray*}
\pl\xi_1,\xi_k\pr\dim(E_1) + \sum_{i=2}^s\pl\xi_i,\xi_k\pr\dim(E_i) 
 = \lambda -(m-2)\e -\|\eta\|^2\e + \|\xi_k\|^2.
\end{eqnarray*}
The equation \eqref{eq:xi_normal} follows by writing 
\begin{equation} \label{eq:mxi_1_k}
m\pl\xi_k,H\pr = \lambda -(m-2)\e -\|\eta\|^2\e + \|\xi_k\|^2,
\end{equation}
as in the previous case.
\end{proof}

\begin{remark} \label{rem:constant}
It follows from Lemma \ref{lemanormaxik-h} that the functions
\[
h_k = \left\|\xi_k-\frac{m}{2}H\right\|^2
\]
are constant along $\{T\}^{\perp}$, for any $2\leq k\leq s$, and for $k=1$
if $\dim E_1>1$.
\end{remark}

\begin{lemma} \label{xikdupin}
The principal normals $\xi_1,\ldots,\xi_s$ of $f$ are parallel along 
$\{T\}^{\perp}$.
\end{lemma}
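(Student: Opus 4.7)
The plan is to combine three ingredients: the Codazzi equation \eqref{eq:Codazziflat}, the constancy of the function $\|\xi_k-\tfrac{m}{2}H\|$ on $\{T\}^\perp$ established in Lemma~\ref{lemanormaxik-h} and Remark~\ref{rem:constant}, and the trace identity $mH=\sum_j\dim(E_j)\xi_j$ that follows from \eqref{eq:AlphaProj}. By Theorem~\ref{theo:introd1} and Proposition~\ref{prop:ClassA}, I may assume throughout that $T\in E_1$.

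For any $X\in\{T\}^\perp$, formula \eqref{eq:AlphaProj} together with $T\in E_1$ gives $\alpha(X,T)=0$, so \eqref{eq:AlphaT} forces $\nabla^\perp_X\eta=0$; in particular $X\|\eta\|^2=0$. Combined with the parallelism of $H$, this reduces Remark~\ref{rem:constant} to the perpendicularity relation
\[
\pl\nabla^\perp_X\xi_k,\,\xi_k-\tfrac{m}{2}H\pr=0
\]
for every $X\in\{T\}^\perp$ and every index $k$ in the range covered by Lemma~\ref{lemanormaxik-h}.

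Next I would fix $X\in E_e$ with $e\neq k$ and $X\perp T$. The Codazzi equation \eqref{eq:Codazziflat}, applied with $X_i=X$, $j=k$ and $X_j=Y_j=\bar X_k$ a unit section of $E_k$, yields (using $\pl X,T\pr=0$)
\[
\nabla^\perp_X\xi_k=\pl\nabla_{\bar X_k}\bar X_k,X\pr(\xi_k-\xi_e),
\]
so $\nabla^\perp_X\xi_k$ lies in the line spanned by $\xi_k-\xi_e$. Using \eqref{eq:mxi_1_k} for both $\pl\xi_k,H\pr$ and $\pl\xi_e,H\pr$, a direct calculation gives
\[
\pl\xi_k-\xi_e,\,\xi_k-\tfrac{m}{2}H\pr=\tfrac{1}{2}\|\xi_k-\xi_e\|^2,
\]
which is strictly positive because the principal normals of $f$ are pairwise distinct. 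Combined with the perpendicularity relation, this forces $\nabla^\perp_X\xi_k=0$.

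The remaining case $X\in E_k$ with $X\perp T$ I would settle via the trace identity: differentiating $mH=\sum_j\dim(E_j)\xi_j$ along $X$ and using parallelism of $H$ gives $\sum_j\dim(E_j)\nabla^\perp_X\xi_j=0$, and the previous paragraph provides $\nabla^\perp_X\xi_j=0$ for all $j\neq k$, hence $\nabla^\perp_X\xi_k=0$ as well. The main delicate point is the degenerate configuration $\dim E_1=1$, in which Lemma~\ref{lemanormaxik-h} does not supply the norm formula for $k=1$; I would handle this by computing $\Ric(T/\|T\|,T/\|T\|)$ directly from \eqref{eq:RicciTensor} to obtain the corresponding relation for $\pl\xi_1,H\pr$ and then closing the argument by the same scheme.
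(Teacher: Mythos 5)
Most of your argument coincides with the paper's proof up to relabelling: for $X\in\G(E_e)\cap\{T\}^{\perp}$ with $e\neq k$, Codazzi reduces $\nabla^\perp_X\xi_k$ to a multiple of $\xi_k-\xi_e$, the constancy of $\|\xi_k-\tfrac{m}{2}H\|^2$ along $\{T\}^{\perp}$ kills the component along $\xi_k-\tfrac{m}{2}H$, the identity $\pl\xi_k-\xi_e,\xi_k-\tfrac{m}{2}H\pr=\tfrac12\|\xi_k-\xi_e\|^2>0$ closes that case, and the trace identity $mH=\sum_j\dim(E_j)\xi_j$ handles $X\in\G(E_k)\cap\{T\}^{\perp}$. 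That is exactly the structure of the proof in the text, and your preliminary observation that $\nabla^\perp_X\eta=0$ for $X\perp T$ is a correct justification of Remark \ref{rem:constant}.

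The gap is precisely in the case you flag as delicate, and your proposed repair does not close it. When $\dim E_1=1$, i.e.\ $E_1=\spa\{T\}$, computing $\Ric(T/\|T\|,T/\|T\|)$ from \eqref{eq:RicciTensor} yields $m\pl\xi_1,H\pr=\lambda-(m-1)\e\|\eta\|^2+\|\xi_1\|^2$, which differs from the relation \eqref{eq:mxi_1_k} by the term $(m-2)\e\|T\|^2$. Feeding this into ``the same scheme'' gives
\begin{equation*}
\Bigl\pl\xi_1-\xi_k,\ \xi_1-\tfrac{m}{2}H\Bigr\pr=\tfrac12\|\xi_1-\xi_k\|^2-\tfrac12(m-2)\e\|T\|^2,
\end{equation*}
and for $\e=1$, $m\geq3$ and $T\neq0$ there is no reason for the right-hand side to be nonzero, so you cannot conclude that the Codazzi coefficient $\pl\nabla_{T/\|T\|}(T/\|T\|),X\pr$ vanishes. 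This is not an isolated loose end: your trace-identity step for $X\in\G(E_k)\cap\{T\}^{\perp}$ uses $\nabla^\perp_X\xi_1=0$ as an input, so the unproved case contaminates the conclusion for every $\xi_k$. The paper disposes of it with a one-line observation that makes the norm computation unnecessary: since $T\in E_1$, equations \eqref{eq:NablaT} and \eqref{eq:AProj} give $\nabla_TT=A_\eta T=\pl\eta,\xi_1\pr T$, which is parallel to $T$; hence $\pl\nabla_TT,Z\pr=0$ for every $Z\perp T$ and Codazzi gives $\|T\|^2\nabla^\perp_Z\xi_1=\pl\nabla_TT,Z\pr(\xi_1-\xi_k)=0$ directly. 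Replacing your $\Ric(T,T)$ detour by this observation completes your proof.
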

\begin{proof}
Given $X,Y\in\G(E_i)$ and $Z\in\G(E_k)\cap\{T\}^{\perp}$, with $2\leq i\leq s$
and $1\leq k\neq i\leq s$, it follows from Codazzi equation \eqref{eq:Codazziflat}
that
\begin{equation} \label{eq:Codazzixi}
\pl X,Y\pr\nabla^{\perp}_{Z}\xi_i = \pl\nabla_{X}Y,Z \pr(\xi_i-\xi_k).
\end{equation}
Since $H$ is parallel, we have
\[
\pl X,Y\pr\nabla^{\perp}_{Z}\left(\xi_i - \frac{m}{2}H\right) = 
\pl\nabla_{X}Y,Z \pr(\xi_i-\xi_k).
\]
This implies that
\begin{equation} \label{eq:Codazzixi2}
\pl X,Y\pr Z\left(\left\|\xi_i - \frac{m}{2}H\right\|^2\right) =
2\left\pl\nabla_{X}Y,Z \right\pr \left\pl\xi_i-\xi_k,\xi_i-\frac{m}{2}H\right\pr
\end{equation}
By Remark \ref{rem:constant}, the left-hand side of \eqref{eq:Codazzixi2}
is zero. Moreover, from \eqref{eq:xi_normal}, and using \eqref{eq:mxi_1}
and \eqref{eq:mxi_1_k}, one has
\[
\left\pl\xi_i-\xi_k,\xi_i-\frac{m}{2}H\right\pr = \frac{1}{2}\|\xi_i-\xi_k\|^2\neq0.
\]
This implies that $\pl\nabla_XY,Z\pr=0$, for all $X,Y\in\G(E_i)$, with
$2\leq i\leq s$. Thus, from \eqref{eq:Codazzixi}, one has
\[
\nabla^{\perp}_Z\xi_i=0,
\]
for all $Z\in\G(E_k)\cap\{T\}^{\perp}$, with $2\leq k\neq i\leq s$.
For $Z\in\G(E_i)$, we obtain
\begin{eqnarray*}
0 &=& \nabla^{\perp}_ZH=\sum_{i\neq k=1}^{s}\dim(E_k)
\nabla^{\perp}_Z\xi_k+\dim(E_i)\nabla^{\perp}_Z\xi_i \\
&=& \dim(E_i)\nabla^{\perp}_Z\xi_i.
\end{eqnarray*}
This shows that $\xi_i$ is parallel along $\{T\}^{\perp}$, for $2\leq i\leq s$.
For the case $i=1$, it follows from Codazzi equation \eqref{eq:Codazziflat}
that
\[
\|T\|^2\nabla^{\perp}_Z\xi_1=\langle \nabla_TT,Z\rangle (\xi_1-\xi_k) = 0,
\]
for every $Z\in \Gamma(E_k)$, with $2\leq k\leq s$, that is, 
$\nabla^{\perp}_Z\xi_1=0$, for every $Z \in \Gamma(E_k)$. 
Finally, if there exists $X \in \Gamma(E_1)\cap \{T\}^{\perp}$, then 
\[
\dim(E_1)\nabla^{\perp}_X\xi_1 = \nabla^{\perp}_XH 
- \sum_{k=2}^{s}\dim(E_k)\nabla^{\perp}_X\xi_k = 0,
\]	
by the previous case. This shows that $\xi_i$ is parallel along $\{T\}^{\perp}$.
\end{proof}

Given an isometric immersion $f\co M^m\to\Qe$ as it has been fixed,
consider the composition $\ft=i\circ f$. In this case, it follows from 
\cite[Corollary 1.3]{MeTo} that $\ft$ also has flat normal bundle.
In order to relate the flatness of the normal bundle of $f$ and $\ft$,
let $\Nb=\pi\circ i$ the unit normal vector field to the inclusion $i$,
and consider the decomposition \eqref{eq:DecompOrt} for
$f$ with principal normal vector fields $\xi_1,\ldots,\xi_s$, where
\[
E_i(x)=\{X\in T_xM:\alpha_f(X,Y)=\pl X,Y\pr\xi_i(x), \ \forall \ Y\in T_xM\}.
\]

\begin{lemma}
The following assertions hold:
\begin{enumerate}
\item[(a)] If $\dim E_1=1$, then \eqref{eq:DecompOrt} is also a 
decomposition for $\tilde{f}$, with principal normal vector fields
given by
\[
i_{\ast}\xi_1-\|\eta\|^2\Nb, i_{\ast}\xi_2-\bar{N},\dots,i_{\ast}\xi_s-\bar{N}.
\]
\item[(b)] 
If $\dim E_1\geq 2$, then
\[
T_xM =\spa\{T(x)\}\oplus\big(E_1(x)\cap\spa\{T(x)\}^\perp\big)\oplus
E_2(x)\oplus\ldots\oplus E_s(x),
\]
is a decomposition for $\tilde{f}$, whose principal normals are given
by
\[
i_{\ast}\xi_1-\|\eta\|^2\bar{N}, i_{\ast}\xi_1-\bar{N}, i_{\ast}\xi_2-\bar{N},\dots,i_{\ast}\xi_s-\bar{N}.
\]
\end{enumerate}
\end{lemma}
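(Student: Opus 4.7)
The plan is to derive the second fundamental form of $\tilde f = i \circ f$ from that of $f$ via the composition formula for isometric immersions, and then match the resulting expression against the canonical representation \eqref{eq:AlphaProj} to read off the eigenspace decomposition and principal normals of $\tilde f$.

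To set things up, I would write $\alpha_{\tilde f}(X,Y) = i_\ast \alpha_f(X,Y) + \alpha_i(f_\ast X, f_\ast Y)$. Using \eqref{eq:ShapeA^i} together with $\dt = f_\ast T + \eta$, the umbilical contribution from the inclusion is $\alpha_i(f_\ast X, f_\ast Y) = (\pl X,T\pr\pl Y,T\pr - \pl X,Y\pr)N$. Combining this with \eqref{eq:AlphaProj} and using that, by Theorem \ref{theo:introd1} together with Proposition \ref{prop:ClassA}, one may assume $T \in E_1$, one obtains
\[
\alpha_{\tilde f}(X,Y) = \sum_{i=1}^s \pl X^i,Y^i\pr\, i_\ast\xi_i + \big(\pl X,T\pr\pl Y,T\pr - \pl X,Y\pr\big)N.
\]
The two identities needed to simplify the coefficient of $N$ are $\pl X,T\pr = \pl X^1,T\pr$ (because $T \in E_1$) and $\|T\|^2 + \|\eta\|^2 = 1$ (because $\dt$ is unitary and $f_\ast T \perp \eta$).

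For case (a), the hypothesis $\dim E_1 = 1$ forces $E_1 = \spa\{T\}$, hence $\pl X,T\pr\pl Y,T\pr = \|T\|^2 \pl X^1,Y^1\pr$, and the displayed identity collapses to
\[
\alpha_{\tilde f}(X,Y) = \pl X^1,Y^1\pr(i_\ast\xi_1 - \|\eta\|^2 \Nb) + \sum_{i=2}^s \pl X^i,Y^i\pr(i_\ast\xi_i - \Nb),
\]
which is exactly \eqref{eq:AlphaProj} for $\tilde f$ with the claimed principal normals and decomposition. For case (b), when $\dim E_1 \geq 2$, the same computation forces a further orthogonal split $E_1 = \spa\{T\} \oplus (E_1 \cap \spa\{T\}^\perp)$: the $N$-contribution differs on the two pieces (yielding $-\|\eta\|^2$ on $\spa\{T\}$ and $-1$ on its complement), which produces the two distinct principal normals $i_\ast\xi_1 - \|\eta\|^2\Nb$ and $i_\ast\xi_1 - \Nb$ stated in the lemma, while the other $E_i$ are untouched.

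The main obstacle is really just bookkeeping rather than a conceptual difficulty: one must verify that the listed normals remain pairwise distinct and that the refined decomposition in (b) is orthogonal. Distinctness of the two new normals follows from $\|T\|^2 \Nb \neq 0$ (i.e., from $T$ being nowhere vanishing, as we are in this regime), while distinctness from each $i_\ast\xi_i - \Nb$ reduces to the original pairwise distinctness of the $\xi_i$; orthogonality is automatic since $\spa\{T\}$ and $E_1 \cap \spa\{T\}^\perp$ are orthogonal inside $E_1$ and the $E_i$'s were already pairwise orthogonal.
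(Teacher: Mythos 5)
Your proposal is correct and follows essentially the same route as the paper: both compute $\alpha_{\tilde f}(X,Y)=i_\ast\alpha_f(X,Y)+\alpha_i(f_\ast X,f_\ast Y)$ with $\alpha_i(f_\ast X,f_\ast Y)=\big(\pl X,T\pr\pl Y,T\pr-\pl X,Y\pr\big)N$, then use $T\in E_1$ and $\|T\|^2=1-\|\eta\|^2$ to read off the principal normals on $\spa\{T\}$ versus its orthogonal complement in each $E_k$. Your added remarks on pairwise distinctness of the resulting normals are a harmless bit of extra bookkeeping the paper leaves implicit.
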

\begin{proof}
Take any vector $X\in E_k(x)$, with $1\leq k\leq s$ and $X\in\{T(x)\}^\perp$.
Then
\begin{eqnarray*}
\alpha_{\ft}(X,Y) &=& i_\ast\alpha_f(X,Y) + \alpha_i(f_\ast X,f_\ast Y) \\
&=& i_\ast\pl X,Y\pr\xi_k - \pl X,Y\pr\Nb + \pl X,T\pr\pl Y,T\pr\Nb \\
&=& \pl X,Y\pr(i_\ast\xi_k-\Nb),
\end{eqnarray*}
for all $Y\in TM$. Moreover, 
\begin{eqnarray*}
\alpha_{\ft}(T,Y) &=& i_\ast\alpha_f(T,Y) + \alpha_i(f_\ast T,f_\ast Y) \\
&=& i_\ast\pl T,Y\pr\xi_1 - \pl T,Y\pr\Nb + \pl T,T\pr\pl T,Y\pr\Nb \\
&=& \pl T,Y\pr(i_\ast\xi_1-\|\eta\|^2\Nb),
\end{eqnarray*}
for all $Y\in TM$, and this concludes the proof.
\end{proof}

From now on we write $E_0=\spa\{T\}$ and, with abuse of notation, 
we write $E_1=E_1\cap E_0^\perp$. Thus, at each point $x\in M$,
we have the decomposition for $\ft$ given by
\begin{equation} \label{eq:decomp_ft}
T_xM = E_0(x)\oplus E_1(x)\oplus\ldots\oplus E_s(x),
\end{equation}
whose principal normals are given by
\[
i_{\ast}\xi_1-\|\eta\|^2\bar{N}, i_{\ast}\xi_1-\bar{N},\dots,
i_{\ast}\xi_s-\bar{N},
\]
respectively, $E_1=\{0\}$ may occur.

\begin{lemma} \label{lem:SphTotGeod}
The distributions $\{E_k\}$ of the decomposition \eqref{eq:decomp_ft}
are such that $E_k$ is spherical and $E_k^\perp$ is totally geodesic,
for all $1\leq k\leq s$. 
\end{lemma}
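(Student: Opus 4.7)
The plan is to exploit the Codazzi equations of $\ft$, which has flat normal bundle by \cite[Cor.\ 1.3]{MeTo}, together with the explicit description of its principal normals and the Dupin-type information of Lemma \ref{xikdupin}. The first step will be to compute $\Nt^\perp_Z\tilde\xi_\ell$ for every $Z\in E_k$ with $1\leq k\leq s$ and every $0\leq\ell\leq s$. Using \eqref{eq:ShapeA^i_2} and \eqref{eq:ShapeA^i_3}, the fact that $\pl Z,T\pr=0$ (because $E_k\subset E_0^\perp$ for $k\geq 1$), $\nabla^\perp_Z\xi_\ell=0$ (Lemma \ref{xikdupin}), and $Z(\|\eta\|^2)=-2\pl\alpha(Z,T),\eta\pr=0$ (since \eqref{eq:AlphaProj} gives $\alpha(Z,T)=0$ when $Z$ and $T$ belong to distinct eigenbundles of $f$), one obtains $\Nt^\perp_Z\tilde\xi_\ell=0$ for every such pair.

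Next, I will apply Codazzi \eqref{eq:Codazziflat} for $\ft$ with $X_k,Y_k\in E_k$ to read off umbilicity of $E_k$. The directions $X_i\in E_i$ with $i\geq 1$ contribute nothing by the previous step, while the direction $T\in E_0$ yields the only nonzero component and gives
\[
\tilde\eta_k=-\frac{\pl\xi_k,\eta\pr}{\|T\|^2}\,T\in E_0\subset E_k^\perp
\]
for the mean curvature vector of the leaves of $E_k$. To prove that $E_k$ is spherical, I will write $\tilde\eta_k=\phi_k T$ and observe that $\nabla_XT=A_\eta X=\pl\eta,\xi_k\pr X\in E_k$ for $X\in E_k$, so that the $E_k^\perp$-projection of $\nabla_X\tilde\eta_k$ reduces to $X(\phi_k)T$. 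The vanishing $X(\phi_k)=0$ will then follow from $X\|T\|^2=2\pl A_\eta X,T\pr=0$ and $X\pl\xi_k,\eta\pr=0$ (using Lemma \ref{xikdupin} and $\nabla^\perp_X\eta=-\alpha(X,T)=0$).

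To show $E_k^\perp$ is totally geodesic I need $\pl\nabla_XY,Z\pr=0$ for all $X,Y\in E_k^\perp$ and $Z\in E_k$. The subcases where $X$ and $Y$ lie in a common $E_i$ with $i\neq k$ follow from the umbilicity of $E_i$ together with $\tilde\eta_i\in E_0$ (for $i\geq 1$), or from $\nabla_TT=A_\eta T\in E_0$ (for $i=0$). The subcase $X\in E_0$, $Y\in E_i$ with $1\leq i\neq k$ follows from Codazzi \eqref{eq:Codazziflat2} applied to the triple $(0,i,k)$, since $\nabla_YT=\pl\eta,\xi_i\pr Y$ is orthogonal to $Z\in E_k$. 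The only remaining case is $X\in E_i$, $Y\in E_j$ with $i,j,k\geq 1$ pairwise distinct; this is the principal obstacle.

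In this last case, Codazzi \eqref{eq:Codazziflat2} for $\ft$ yields only the single vector identity
\[
\pl\nabla_YX,Z\pr(\tilde\xi_i-\tilde\xi_k)=\pl\nabla_XY,Z\pr(\tilde\xi_j-\tilde\xi_k),
\]
which forces both coefficients to vanish only if the normal-bundle vectors $\tilde\xi_i-\tilde\xi_k$ and $\tilde\xi_j-\tilde\xi_k$ are linearly independent. To obtain this independence I will invoke Lemma \ref{lemanormaxik-h}, which implies that $\|\xi_\ell-\tfrac{m}{2}H\|^2$ is the same for all relevant $\ell$, so that all such principal normals of $f$ lie on a common sphere in $TM^\perp_f$ centered at $\tfrac{m}{2}H$. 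Since three distinct points on a sphere cannot be collinear, $\xi_i-\xi_k$ and $\xi_j-\xi_k$ are linearly independent in $TM^\perp_f$, and hence so are their $i_\ast$-images $\tilde\xi_i-\tilde\xi_k$ and $\tilde\xi_j-\tilde\xi_k$ in the normal bundle of $\ft$; this forces $\pl\nabla_XY,Z\pr=0$ and completes the proof.
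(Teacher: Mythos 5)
Your proposal is correct and follows essentially the same route as the paper: umbilicity of $E_k$ with mean curvature vector $-\|T\|^{-2}\pl\xi_k,\eta\pr T$ via the Codazzi equations and Lemma \ref{xikdupin}, sphericity from the constancy of $\|T\|$ and $\pl\xi_k,\eta\pr$ along $\{T\}^\perp$, and total geodesy of $E_k^\perp$ by the same case analysis. Your observation that the linear independence of $\xi_i-\xi_k$ and $\xi_j-\xi_k$ follows because three distinct points on the sphere of Lemma \ref{lemanormaxik-h} cannot be collinear is just a geometric rephrasing of the paper's contradiction argument.
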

\begin{proof}
Given $X,Y\in\G(E_k)$, $1\leq k\leq s$, we have
\begin{eqnarray*}
\pl\nabla_XY,T\pr &=& -\pl Y,\nabla_XT\pr = - \pl T,A_\eta X\pr =
-\pl Y,A_{i_\ast X}^{\ft}X\pr \\
&=& - \pl\alpha_{\ft}(X,Y),i_\ast\eta\pr 
=- \pl X,Y\pr\pl i_\ast\xi_k-\Nb,i_\ast\eta\pr \\
&=& -\pl X,Y\pr\pl\xi_k,\eta\pr =- \pl X,Y\pr\pl T,\pl\xi_k,\eta\pr T\pr
\frac{1}{\|T\|^2}
\end{eqnarray*}
that is,
\begin{equation} \label{Ekumbilica}
\pl\nabla_XY, T\pr = \pl X,Y\pr\pl T,\delta\pr,
\end{equation}
where
\begin{equation} \label{Ekumbilica3}
\delta=-\frac{1}{\|T\|^2}\langle\xi_k,\eta\rangle T\in\G(E_k^{\perp}).
\end{equation}
Moreover, for $Z\in\G(E_i)$, with $1\leq i\neq k\leq s$,
Lemma \ref{xikdupin} yields $\nabla^{\perp}_Z\xi_k=0$. Thus, it follows
from Codazzi equation \eqref{eq:Codazziflat} that
\begin{equation} \label{Ekumbilica2}
\langle \nabla_XY,Z\rangle=0=\langle X,Y\rangle\langle Z, \delta\rangle.
\end{equation}
Equations \eqref{Ekumbilica} and \eqref{Ekumbilica2} imply that
$E_k$ is an umbilical distribution with mean curvature vector field
$\delta$ given in \eqref{Ekumbilica3}. On the other hand, it follows
from Lemma \ref{xikdupin} and the fact that $\|T\|$ is contant along
$\{T\}^\perp$, that
\[
\pl\nabla_X\delta,Z\pr = -\frac{1}{\|T\|^2}\pl T,Z\pr\pl\eta,
\nabla^\perp_X\xi_k\pr=0,
\]
for all $X\in \Gamma(E_k)$ and $Z\in \Gamma(E_k^{\perp})$, and this
shows that $E_k$ is a spherical distribution. In order to show that 
$E_k^\perp$ is totally geodesic, it suffices to prove that
\begin{equation} \label{eq:E_k^perp}
\pl\nabla_XY,Z\pr = 0,
\end{equation}
for all $X\in\G(E_i)$, $Y\in\G(E_j)$ and $Z\in\G(E_k)$, with 
$1\leq k\leq s$, $0\leq i,j\leq s$, $k\neq\{i,j\}$. For $0\leq i=j\leq s$,
equation \eqref{eq:E_k^perp} follows from Lemma \ref{xikdupin}
and Codazzi equation. Now, let $0\leq i\neq j\leq s$, and let us
consider the following cases.
\vspace{.2cm} \\
Case 1: If $j=0$ and $Y=\lambda T$, for some smooth function $\lambda$
along $M$, then 
\begin{eqnarray*}
\pl\nabla_XY,Z\pr &=& \pl\nabla_X\lambda T,Z\pr = 
\pl X(\lambda)T+\lambda\nabla_XT,Z\pr \\
& = & \lambda\langle A_{\eta}X,Z\rangle=0.
\end{eqnarray*}
Case 2: If $i=0$ and $X=\lambda T$, for some smooth function $\lambda$
along $M$, it follows from Codazzi equation \eqref{eq:Codazziflat2} that
\begin{eqnarray*}
\pl\nabla_XY,Z\pr(\xi_j-\xi_k) &=& \pl\nabla_{\lambda T}Y,Z\pr(\xi_j-\xi_k) \\
& = & \langle \nabla_{Y}\lambda T,Z\rangle (\xi_i-\xi_k) \\
&=& 0
\end{eqnarray*}
as in the previous case.
\vspace{.2cm} \\
Case 3: Suppose now $1\leq i\neq j\leq s$ and let $X,Y\in\{T\}^{\perp}$.
It follows from Codazzi equation \eqref{eq:Codazziflat2} that
\begin{equation} \label{eqflat2li}
\langle \nabla_{X}Y,Z\rangle(\xi_j-\xi_k)=
\langle \nabla_{Y}X,Z\rangle (\xi_i-\xi_k).
\end{equation}
We claim that the vector fields $\xi_i-\xi_k$ and $\xi_j-\xi_k$ are linearly
independent. Indeed, assume otherwise that there exists a smooth
nonzero function $u$ along $M$ such that
\[
\xi_i-\xi_k=u(\xi_j-\xi_k).
\]
It follows that
\[
(u-1)\left(\xi_k-\frac{m}{2}H\right) =
u\left(\xi_j-\frac{m}{2}H\right) - \left(\xi_i-\frac{m}{2}H\right).
\]
By taking the norms, it follows from \eqref{eq:xi_normal} that
\begin{equation} \label{Ekumbilica4}
\left\|\xi_j-\frac{m}{2}H\right\|^2 = 
\left\pl\xi_j-\frac{m}{2}H,\xi_i-\frac{m}{2}H \right\pr.
\end{equation}
Now, using \eqref{eq:mxi_1} and \eqref{eq:mxi_1_k} in \eqref{Ekumbilica4},
we obtain $\|\xi_i=\xi_j\|$, that is, $\xi_i=\xi_j$, which is a contradiction.
The conclusion now follows from \eqref{eqflat2li}.
\end{proof}

Finally, we can prove the main result of this paper.

\begin{proof}[Proof of Theorem \ref{theo:main}]
Let $f\co M^m\to\Se$, $m\geq3$, be a proper isometric immersion
of a connected Einstein manifold, with flat normal bundle and parallel
mean curvature vector field $H$. Moreover, suppose that the vector
field $T$ given in \eqref{eq:dt} is nowhere vanishing. Let $\ft=i\circ f$ 
be considered as a submanifold
in the Euclidean space $\R^{n+2}$, and consider the distributions 
$(E_k)$, $0\leq k\leq s$, of $M^m$
given in \eqref{eq:decomp_ft} which, by virtue of Lemma 
\ref{lem:SphTotGeod}, is such that $E_k$ is spherical and 
$E_k^\perp$ is totally geodesic, for $1\leq k\leq s$. It follows
from theorem of the Hiepko \cite{Hi} that there exists locally (globally,
if $M^m$ is simply connected and complete) a product 
representation
\[
\psi\co M_0\times M_1\times\ldots\times M_s\to M^m
\]
which is an isometry with respect to a warped product metric on the
product $M_0\times M_1\times\ldots\times M_s$, where $M_0$ is
totally geodesic and $M_k$ are intrinsic spheres, for $1\leq k\leq s$.
In this case, as $\dim(M_0)=1$, we can suppose that $M_0$ is an
open interval $I_0\subset\R$. Consider now the immersion
\[
F=\ft\circ\psi\co I_0\times_{\rho_1}M_1^{n_1}\times \ldots\times_{\rho_s}
M_s^{n_s}\to\R^{n+2}.
\]
$F$ is an isometric immersion defined in a warped product manifold,
whose second fundamental form is adapted to the product net
\eqref{eq:decomp_ft} of $M^m$. It follows from Nolker's theorem
\cite{No} that $F$ is an extrinsic warped product of isometric
immersions. More precisely, there exist a warped product
representation 
\[
\phi\co\R^{s+1}\times_{\sigma_1}N_1^{n_1+k_1}\times\ldots
\times_{\sigma_s}N_s^{n_s+k_s}\to\R^{n+2},
\]
isometric immersions $f_j\co M_j^{n_j}\to N_j^{n_j+k_j}$, $1\leq j\leq s$, 
and an smooth curve $f_0\co I_0\to\R^{s+1}$ such that
$\rho_j=\sigma_j\circ f_0$ for $1\leq j\leq s$ and
\[
F = \phi\circ(f_0\times\ldots\times f_s),
\]
where $N_j^{n_j+k_j}$ are Euclidean spheres. Moreover, each
isometric immersion $f_j\co M_j^{n_j}\to N_j^{n_j+k_j}$ is umbilical,
with mean curvature vector fiel given by $H_j=i_\ast\xi_j-\Nb$. 
Each of these normal vector fields is parallel along the normal
connection $\Ntj^\perp$ of $f_j$, which is the restriction of 
$\Nt^\perp$ along $M_j^{n_j}$. In fact, using Lemma \ref{xikdupin},
and equations \eqref{eq:ShapeA^i_2} and \eqref{eq:ShapeA^i_3},
we obtain
\begin{eqnarray*}
\Ntj^\perp_XH_j &=& \Ntj^\perp_Xi_\ast\xi_j - \Ntj^\perp_X\Nb \\
&=& i_\ast\nabla^\perp_X\xi_j + \pl X,T\pr\pl\xi_j,\eta\pr\Nb +
\pl X,T\pr i_\ast\eta \\
&=& 0,
\end{eqnarray*}
for every $X\in\X(M_j^{n_j})$. Therefore, each $f_j$ reduce
codimension to one, and by rigidity, all of them must be the
canonical inclusion. We conclude that $f$ is a multi-rotational
submanifold with profile $f_0$, where $f_0$ is a smooth
curve determined by the vector field $T$.
\end{proof}

\bibliographystyle{amsplain}

\Addresses

\end{document}